\documentclass[11pt,a4paper,reqno]{amsart}
\title[On the magnitude of a finite dimensional algebra]
 {On the magnitude
  of a finite dimensional algebra}
\date{\today}

\author{Joseph Chuang}
\address{JC: Centre for Mathematical Science\\ City Univ.\\
 London EC1V 0HB\\ U.K.}
\email{j.chuang@city.ac.uk}

\author{Alastair King}
\address{AK: Mathematical Sciences\\ Univ. of Bath\\
 Bath BA2 7AY\\ U.K.}
\email{a.d.king@bath.ac.uk}

\author{Tom Leinster}
\address{TL: School of Mathematics\\ Univ. of Edinburgh\\
 Edinburgh EH9 3FD\\ U.K.}
\email{tom.leinster@ed.ac.uk}
\usepackage{geometry} 
  \geometry{left=3.5cm,right=3.5cm}
\usepackage{url,hyperref}
\usepackage{color}
\definecolor{darkblue}{rgb}{0,0,0.5}
\hypersetup{colorlinks,
linkcolor=darkblue,
citecolor=darkblue,
urlcolor=darkblue}

\usepackage{amssymb,amsthm,array}

\theoremstyle{plain}
\newtheorem{theorem}{Theorem}[section]

\theoremstyle{definition}

\newtheorem{example}[theorem]{Example}

\numberwithin{equation}{section}

\newcommand{\QQ}{\mathbb{Q}}
\newcommand{\ZZ}{\mathbb{Z}}
\newcommand{\tensor}{\otimes}
\newcommand{\Hom}{\operatorname{Hom}}
\newcommand{\Ext}{\operatorname{Ext}}
\newcommand{\End}{\operatorname{End}}

\newcommand{\class}[1]{\bigl[#1\bigr]}
\newcommand{\classl}[1]{[#1]}
\newcommand{\Grot}[1]{\operatorname{K}(#1)}
\newcommand{\DM}{Z}
\newcommand{\dm}{Z}
\newcommand{\CM}{C}
\newcommand{\cm}{C}
\newcommand{\cmi}{\overline{C}}
\newcommand{\dmi}{\overline{Z}}
\newcommand{\EuM}{E}
\newcommand{\eum}{E}
\newcommand{\Stil}{\widetilde{S}}

\newcommand{\cat}[1]{\mathbf{#1}}
\newcommand{\scat}[1]{\mathbf{#1}}
\newcommand{\fcat}[1]{\mathbf{#1}}
\newcommand{\mg}[1]{\left| #1 \right|}
\newcommand{\Mod}[1]{#1\text{-}\fcat{Mod}}
\newcommand{\IP}[1]{\fcat{IP}(#1)}
\newcommand{\Vect}{\fcat{Vect}}
\newcommand{\demph}[1]{\textbf{#1}}
\newcommand{\RR}{\mathbb{R}}
\newcommand{\NN}{\mathbb{N}}
\newcommand{\eqv}{\simeq}
\newcommand{\iso}{\cong}
\newcommand{\from}{\colon}
\newcommand{\op}{{\operatorname{op}}}

\begin{document}

\sloppy

\begin{abstract}
There is a general notion of the magnitude of an enriched category, defined
subject to hypotheses.  In topological and geometric contexts, magnitude is
already known to be closely related to classical invariants such as Euler
characteristic and dimension.  Here we establish its significance in an
algebraic context.  Specifically, in the representation theory of an
associative algebra~$A$, a central role is played by the indecomposable
projective $A$-modules, which form a category enriched in vector spaces.
We show that the magnitude of that category is a known homological
invariant of the algebra: writing $\chi_A$ for the Euler form of $A$ and
$S$ for the direct sum of the simple $A$-modules, it is $\chi_A(S, S)$.
\end{abstract}

\maketitle

\section{Introduction}

This paper is part of a large programme to define and investigate
cardinality-like invariants of mathematical objects.  Given a monoidal
category $\cat{V}$ together with a notion of the `size' $\mg{X}$ of each
object $X$ of $\cat{V}$, there arises automatically a notion of the `size'
or `magnitude' of each $\cat{V}$-category (subject to conditions).  Here we
apply this general method in the context of associative algebras.

More specifically, for any finite-dimensional algebra $A$, the category
$\IP{A}$ of indecomposable projective $A$-modules plays a central role
(discussed below) in the theory of representations of $A$.  This category
is enriched in finite-dimensional vector spaces, and, taking dimension as
the base notion of size, we can then consider the magnitude of $\IP{A}$.
We show that this is a known homological invariant of the original algebra
$A$.

Little algebra will be assumed on the reader's part; all the necessary
background is provided in Section~\ref{sec:background}.

The general definition of magnitude is as follows~\cite[\S 1.3]{LeMMS}.
Let $\cat{V}$ be a monoidal category equipped with a function
$\mg{\,\cdot\,}$ on its set of objects (taking values in a semiring, say).
Let $\scat{A}$ be a $\cat{V}$-category with finitely many objects.  Denote
by $\DM_\scat{A} = (\dm_{ab})$ the square matrix whose rows and columns are
indexed by the objects of $\scat{A}$, and with entries
\begin{equation}
\dm_{ab} = \mg{\scat{A}(a, b)}
\end{equation}
($a, b \in \scat{A}$).  If $\DM_\scat{A}$ is invertible, the
\demph{magnitude} $\mg{\scat{A}}$ of $\scat{A}$ is defined to be the sum of
all the entries of $\DM_\scat{A}^{-1}$.

Since $\DM_\scat{A}$ need not be invertible, magnitude is not defined for
every $\scat{A}$.  But where magnitude \emph{is} defined, we may harmlessly
extend the definition by equivalence, setting $\mg{\scat{A}} =
\mg{\scat{B}}$ whenever $\scat{A}$ and $\scat{B}$ are equivalent
$\cat{V}$-categories such that $\scat{B}$ has finitely many objects and
$\DM_\scat{B}$ is invertible.  (There is no problem of consistency, since if
$\scat{A}$ and $\scat{B}$ are equivalent and both $\DM_\scat{A}$ and
$\DM_\scat{B}$ are invertible then both $\scat{A}$ and $\scat{B}$ are
skeletal---that is, isomorphic objects are equal---and so $\scat{A}$ and
$\scat{B}$ are isomorphic.)

Unmotivated as this definition may seem, multiple theorems attest that
magnitude is the canonical notion of the size of an enriched category.  For
example, take $\cat{V}$ to be the category of finite sets and $\mg{X}$ to
be the cardinality of a finite set $X$.  Then we obtain a notion of the
magnitude of a finite category.  In this context, magnitude is also called
Euler characteristic~\cite{LeECC}, for the following reason.  Recall that
every small category $\scat{A}$ gives rise to a topological space
$B\scat{A}$, its classifying space or geometric realisation.
Proposition~2.11 of~\cite{LeECC} states that under finiteness hypotheses,
\begin{equation}
\mg{\scat{A}} = \chi(B\scat{A}).
\end{equation}
Thus, the Euler characteristic of a category has a similar status to group
(co)homology: it is defined combinatorially, but agrees with the
topological notion when one passes to the classifying space.

For another example, let $\cat{V}$ be the ordered set $([0, \infty], \geq)$
with addition as the monoidal structure, so that metric spaces can be
viewed as $\cat{V}$-categories~\cite{La}.  For $x \in [0, \infty]$, put
$\mg{x} = e^{-x}$.  (The virtue of this choice is that $\mg{x \otimes y} =
\mg{x} \mg{y}$.)  Then we obtain a notion of the magnitude of a finite
metric space.  This extends naturally to a large class of compact metric
spaces~\cite{LeMMS,MePDMS,MeMDC}.  The magnitude of a compact subset of
$\RR^n$ is always well-defined, and is closely related to classical
quantities of geometric measure.  For example, a theorem of Meckes
\cite[Corollary 7.4]{MeMDC} shows that Minkowski dimension can be
recovered from magnitude, and conjectures of Leinster and
Willerton~\cite{LeWi} state that magnitude also determines invariants
such as volume and surface area.  

Here we study the case where $\cat{V}$ is the category of
finite-dimensional vector spaces and $\mg{X} = \dim X$.  We then obtain a
notion of the magnitude of a linear (that is, $\cat{V}$-enriched) category.
Our main theorem is this:

\begin{theorem}
\label{thm:main}
Let $A$ be an algebra of finite dimension and finite global dimension over
an algebraically closed field.  Write $\IP{A}$ for the linear category of
indecomposable projective $A$-modules, $(S_i)_{i \in I}$ for
representatives of the isomorphism classes of simple $A$-modules, and $S =
\bigoplus_{i \in I} S_i$.  Then
\begin{equation}
\label{eq:main}
\mg{\IP{A}} 
=
\sum_{n = 0}^\infty (-1)^n \dim \Ext_A^n(S, S).
\end{equation}
\end{theorem}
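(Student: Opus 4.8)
The plan is to reduce the computation of $\mg{\IP{A}}$ to linear algebra on the Cartan matrix, and then to recognise $\chi_A(S, S)$ as a sum of entries of its inverse. First I would replace $\IP{A}$ by a skeleton, whose objects are the projective covers $P_i$ of the simples $S_i$, one for each $i \in I$; by the equivalence-invariance of magnitude recalled above, this changes nothing. The magnitude matrix is then $\DM_{\IP{A}} = (\dim \Hom_A(P_i, P_j))_{i,j}$. Since $\Hom_A(P_i, -)$ is exact and, over an algebraically closed field, $\dim \Hom_A(P_i, S_k) = \delta_{ik}$, additivity along a composition series identifies this with the Cartan matrix $\CM = (\cm_{ij})$, where $\cm_{ij} = [P_j : S_i]$. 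Thus, once $\CM$ is shown to be invertible, $\mg{\IP{A}}$ is the sum of all entries of $\CM^{-1}$.

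Next I would introduce the matrix $\EuM = (\eum_{ij})$ of the Euler form in the basis of simples, $\eum_{ij} = \chi_A(S_i, S_j) = \sum_{n} (-1)^n \dim \Ext_A^n(S_i, S_j)$; finite dimensionality together with finite global dimension make each entry a well-defined integer, and make $\chi_A$ a bilinear form on the Grothendieck group of finitely generated $A$-modules. By definition, $\chi_A(S, S) = \sum_{i,j} \eum_{ij}$ is the sum of all entries of $\EuM$. The crux of the argument is the duality $\CM^T \EuM = I$. To establish it, I would expand the class of each projective as $\class{P_j} = \sum_i \cm_{ij} \class{S_i}$, and use that projectivity forces $\chi_A(P_a, S_b) = \dim \Hom_A(P_a, S_b) = \delta_{ab}$; bilinearity of $\chi_A$ then yields $\sum_i \cm_{ij} \eum_{il} = \delta_{jl}$, which is precisely the identity $\CM^T \EuM = I$. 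As these are square integer matrices, this simultaneously proves that $\CM$ is invertible (indeed $\det \CM = \pm 1$) and gives $\EuM = (\CM^{-1})^T$.

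Finally, the theorem falls out: the sum of all entries of a square matrix is invariant under transposition, so the sum of all entries of $\EuM = (\CM^{-1})^T$ coincides with the sum of all entries of $\CM^{-1}$, that is, $\chi_A(S, S) = \mg{\IP{A}}$. I expect the genuine content — and the single place where finite global dimension is indispensable — to be the perfect pairing $\CM^T \EuM = I$ between projectives and simples under the Euler form. Everything around it is either standard representation theory (the identification of the magnitude matrix with the Cartan matrix, and the vanishing of higher $\Ext$ out of a projective) or the elementary remark that summing all the entries of a matrix is transpose-invariant.
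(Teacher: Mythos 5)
Your proof is correct and follows essentially the same route as the paper: identify the magnitude matrix of a skeleton of $\IP{A}$ with the Cartan matrix, use the duality $\chi_A(P_i, S_j) = \delta_{ij}$ (valid over an algebraically closed field, with higher $\Ext$ out of a projective vanishing) together with the expansion of $\class{P_j}$ in the classes of simples to show that the Euler matrix of the simples inverts it, and then sum all the entries. The only cosmetic differences are that you index the Euler matrix as $\chi_A(S_i, S_j)$ rather than $\chi_A(S_j, S_i)$, which you correctly absorb by observing that the sum of all entries of a matrix is transpose-invariant, and that you extract invertibility of the Cartan matrix directly from the integer-matrix identity $C^{\mathsf{T}}E = I$ rather than from the prior fact that the simples and the indecomposable projectives both give $\ZZ$-bases of the Grothendieck group.
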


We now explain the context of this result; background can be found in the
next section.  

Any associative algebra $A$ gives rise to several linear categories,
including the category of all $A$-modules and the one-object category
corresponding to $A$ itself (which trivially has magnitude $1/\dim A$).
But it also gives rise to the category $\IP{A}$ of indecomposable
projective $A$-modules, whose main significance is that its representation
theory is the same as that of $A$:
\begin{equation}
\label{eq:morita}
\begin{array}{ccc}
\Mod{A}         &\eqv           &[\IP{A}^\op, \Vect]    \\
M               &\mapsto        &\Hom_A(-, M)
\end{array}
\end{equation}
where the right-hand side is the category of contravariant linear functors
from $\IP{A}$ to vector spaces.  In other words, $\IP{A}^\op$ and the
one-object linear category $A$ are Morita equivalent.


The Krull--Schmidt theorem states that every finitely generated $A$-module
can be expressed as a direct sum of indecomposable modules, in an
essentially unique way.  It implies that the $A$-module $A$ is a direct sum
of indecomposable \emph{projective} modules, and that, moreover, 
every indecomposable projective appears at least once
in this sum.  Thus, the indecomposable projectives are the `atoms' of
$A$, in the sense of being its constituent parts.

This explains the equivalence~\eqref{eq:morita}.  The absolute colimits in
linear categories are the finite direct sums and idempotent splittings
(that is, direct summands).  Every finitely generated projective module is
a direct sum of indecomposable projectives, so the category of finitely
generated projectives is the Cauchy completion of $\IP{A}$.  On the other
hand, every finitely generated projective is a direct summand of a direct
sum of copies of the $A$-module $A$, so the category of finitely generated
projectives is also the Cauchy completion of the one-object category
$A^\op$.  Hence $\IP{A}$ and $A^\op$ have the same Cauchy completion, and
are therefore Morita equivalent.

The simple modules, too, can be thought of as `atomic' in a different
sense.  A simple module need not be indecomposable projective, nor vice
versa.  However, the two conditions are closely related: as recounted in
Section~\ref{sec:background}, there is a canonical bijection between the
isomorphism classes of simple modules and the isomorphism classes of
indecomposable projectives.


The condition that $A$ has finite global dimension guarantees that the sum
in~\eqref{eq:main} has only finitely many nonzero terms.  The condition
that $A$ has finite dimension guarantees that the linear category $\IP{A}$
is equivalent to one with finitely many objects and finite-dimensional
hom-spaces, as we shall see.  This is a necessary condition in order for
the magnitude of $\IP{A}$ to be defined.  It is not a sufficient condition,
but part of the statement of Theorem~\ref{thm:main} is that $\mg{\IP{A}}$
\emph{is} defined.

Theorem~\ref{thm:main} was first noted by Catharina Stroppel under the
additional hypothesis that $A$ is a Koszul algebra (personal communication,
2009).  We observe here that the Koszul assumption is unnecessary.

\section{Algebraic background}
\label{sec:background}

Here we assemble all the facts that we will need in order to state and
prove the main theorem.  General references for this section
are~\cite[Chapter~I]{SY} and \cite[Chapter~1]{Be}.

Throughout this note, $K$ denotes a field and $A$ a finite-dimensional
$K$-algebra (unital, but not necessarily commutative).  `Module' will
mean left $A$-module.  Since $A$ is finite-dimensional, a module is
finitely generated over $A$ if and only if it is finite-dimensional over
$K$.

\subsection*{Simple and indecomposable projective modules}

Details for this part can be found in~\cite{LeBB}, as well as in the
general references above.

A nonzero module is \demph{simple} if it has no nontrivial submodule, and
\demph{indecomposable} if it has no nontrivial direct summand.  There is a
canonical bijection between the isomorphism classes of simple modules $S$
and the isomorphism classes of indecomposable projective modules $P$, with
$S$ corresponding to $P$ if and only if $S$ is a quotient of $P$.  (It is
not an equivalence of categories.)


Choose representatives $(S_i)_{i \in I}$ of the isomorphism classes of
simple modules and $(P_i)_{i \in I}$ of the isomorphism classes of
indecomposable projective modules, with $S_i$ a quotient of $P_i$.

Modules of both types are finitely generated (indeed, cyclic), so each
vector space $\Hom_A(P_i, P_j)$ is finite-dimensional.  Moreover, one can
use either the Jordan--H\"older theorem or the Krull--Schmidt theorem to
show that $I$ is finite.  Denote by $\IP{A}$ the category of indecomposable
projective $A$-modules and all homomorphisms between them, which is a
$K$-linear category.  Then $\IP{A}$ has finite-dimensional hom-spaces and
only finitely many isomorphism classes of objects.

We have $\Hom_A(P_i, S_j) = 0$ when $i \neq j$, since any homomorphism into
a simple module is zero or surjective.  It can be shown that $\Hom_A(P_i,
S_i) \iso \End_A(S_i)$ as vector spaces.  This is a skew field, isomorphic
to $K$ if $K$ is algebraically closed.

\subsection*{Homological algebra}

For each $n \geq 0$, there is a functor 
\begin{equation}
\Ext_A^n \from \Mod{A}^\op \times \Mod{A} \to \Vect.  
\end{equation}
One can characterise $\Ext_A^n(X, -)$ as the $n$th right derived functor of
$\Hom_A(X, -)$, and $\Ext_A^n(-, Y)$ as the $n$th right derived functor of
$\Hom_A(-, Y)$, but we will only need the following consequences of these
characterisations.

First, $\Ext_A^0 = \Hom_A$.  Second, if $P$ is projective then $\Ext_A^n(P, -)
= 0$ for all $n > 0$.  Third, $\Ext_A^n$ preserves finite direct sums in
each argument.  Fourth, $\Ext_A^n(X, Y)$ is finite-dimensional if both $X$
and $Y$ are.  Finally, given any $A$-module $V$ and short exact sequence
\begin{equation}
\label{eq:ses}
0 \to W \to X \to Y \to 0,
\end{equation}
there is an induced long exact sequence
\begin{align}
0       &
\to      
\Ext_A^0(V, W) \to \Ext_A^0(V, X) \to \Ext_A^0(V, Y)    
\nonumber \\
&
\to    
\Ext_A^1(V, W) \to \Ext_A^1(V, X) \to 
\cdots,
\label{eq:les}
\end{align}
and dually a long exact sequence $0 \to \Ext_A^0(Y, V) \to \cdots$.

Assume henceforth that $A$ has \demph{finite global
  dimension}~\cite[Chapter~4]{We}.  This means that there exists $N \in
\NN$ such that every $A$-module $X$ has a projective resolution of the form
\begin{equation}
\label{eq:proj-reso}
0 \to Q_N \to \cdots \to Q_1 \to X \to 0.
\end{equation}
When $X$ is finite-dimensional, the projective modules $Q_i$ can be
chosen to be finite-dimensional too.

A condition equivalent to finite global dimension is that $\Ext_A^n = 0$
for all $n \gg 0$.  For finite-dimensional $A$-modules $X$ and $Y$, we may
therefore define
\begin{equation}
\chi_A(X, Y) = \sum_{n = 0}^\infty (-1)^n \dim \Ext_A^n(X, Y) \in \ZZ
\end{equation}
(a finite sum).  This $\chi_A$ is the \demph{Euler form} of $A$.  We have
$\chi_A(\bigoplus_r X_r, -) = \sum_r \chi_A(X_r, -)$ for any finite family
$(X_r)$ of modules, and similarly in the second argument.  Moreover, the
observations above imply that
\begin{equation}
\label{eq:chi-Z}
\chi_A(P_i, P_j) = \dim\Hom_A(P_i, P_j)
\end{equation}
for all $i, j \in I$, and that 
\begin{equation}
\label{eq:pairing}
\chi_A(P_i, S_j) 
=
\begin{cases}
\dim \End_A(S_j)        &\text{if } i = j,      \\
0                       &\text{if } i \neq j.
\end{cases}
\end{equation}
When $K$ is algebraically closed, $\chi_A(P_i, S_j)$ is therefore just the
Kronecker delta $\delta_{ij}$.

\subsection*{Grothendieck group}

The \demph{Grothendieck group} $\Grot{A}$ is the abelian group generated by
the finite-dimensional $A$-modules, subject to the relation $X = W + Y$ for
each short exact sequence~\eqref{eq:ses} of finite-dimensional modules.
Writing $\classl{X}$ for the class of $X$ in $\Grot{A}$, one easily deduces
that, more generally, $\sum_r (-1)^r \class{X_r} = 0$ for any exact
sequence
\begin{equation}
0 \to X_1 \to \cdots \to X_n \to 0.
\end{equation}
For example, take a short exact sequence~\eqref{eq:ses} and a
finite-dimensional module $V$.  The resulting long exact
sequence~\eqref{eq:les} has only finitely many nonzero terms (since $A$ has
finite global dimension), so the alternating sum of the dimensions of these
terms is $0$, giving $\chi_A(V, X) = \chi_A(V, W) + \chi_A(V, Y)$.  The
same holds with the arguments reversed.  Thus, $\chi_A$ defines a
$\ZZ$-bilinear map $\Grot{A} \times \Grot{A} \to \ZZ$.

We now show that $\Grot{A}$ is free as a $\ZZ$-module, and in fact has two
canonical bases.

First, the family $\bigl(\class{S_i}\bigr)_{i \in I}$ generates the group
$\Grot{A}$.  Indeed, for any finite-dimensional $A$-module $X$, we may take
a composition series
\begin{equation}
0 = X_n < \cdots < X_1 < X_0 = X,
\end{equation}
and then $\classl{X} = \sum_{r = 1}^n \class{X_{r - 1}/X_r}$.

Second, the family $\bigl(\class{P_i}\bigr)_{i \in I}$ generates
$\Grot{A}$.  Given a finite-dimensional $A$-module $X$, we may take a
resolution~\eqref{eq:proj-reso} by finite-dimensional projective modules,
and then $\classl{X} = \sum_{r = 1}^N (-1)^{r + 1} \class{Q_r}$.  On the
other hand, each $Q_r$ is a finite direct sum of indecomposable submodules,
which are projective since $Q_r$ is.

Finally, both $\bigl(\class{S_i}\bigr)$ and $\bigl(\class{P_i}\bigr)$
\emph{freely} generate the abelian group $\Grot{A}$.  This follows
from~\eqref{eq:pairing} and the $\ZZ$-bilinearity of $\chi_A$.

\section{The result}

Recall our standing conventions: $A$ is an algebra of finite dimension and
finite global dimension, over a field $K$ which we now assume to be
algebraically closed.  We continue to write $(P_i)_{i \in I}$ for
representatives of the isomorphism classes of indecomposable projective
$A$-modules, and similarly $(S_i)_{i \in I}$ for the simple modules, with
$S_i$ a quotient of $P_i$.

The linear category $\IP{A}$ of indecomposable projective $A$-modules is
equivalent to its full subcategory with objects $P_i$ ($i \in I$).  Write
$\DM_A = (\dm_{ij})_{i, j \in I}$ for the matrix of this finite linear
category, so that $\dm_{ij} = \dim\Hom_A(P_i, P_j)$.

We will derive our main result, Theorem~\ref{thm:main}, from the following
basic theorem.  (See e.g.~\cite[Proposition~III.3.13(a)]{ASS} for an
essentially equivalent formulation.)  It implies, in particular, that the
matrix $\DM_A$ is invertible over the integers.

\begin{theorem}
\label{thm:inverse}
The inverse of the matrix $\DM_A$ is the `Euler matrix' $\EuM_A =
(\eum_{ij})_{i, j \in I}$, given by $\eum_{ij} = \chi_A(S_j, S_i)$.
\end{theorem}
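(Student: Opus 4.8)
The plan is to show that $\DM_A \EuM_A = I$, the identity matrix, which (since both matrices are square and integer-valued) suffices to establish that $\EuM_A$ is the two-sided inverse of $\DM_A$. Explicitly, I want to verify that
\begin{equation}
\sum_{k \in I} \dm_{ik} \eum_{kj} = \delta_{ij}
\quad\text{for all } i, j \in I,
\end{equation}
where $\dm_{ik} = \dim\Hom_A(P_i, P_k)$ and $\eum_{kj} = \chi_A(S_j, S_k)$.

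The key idea is to recognise the matrices $\DM_A$ and $\EuM_A$ as the matrices of the bilinear form $\chi_A$ with respect to the two distinguished bases of the Grothendieck group $\Grot{A}$, and then exploit the biorthogonality relation \eqref{eq:pairing}. First I would rewrite $\dm_{ik}$ using \eqref{eq:chi-Z} as $\dm_{ik} = \chi_A(P_i, P_k)$. Next, the crucial input is to express each class $\class{P_k}$ in terms of the simple classes: since $\bigl(\class{S_\ell}\bigr)_{\ell \in I}$ freely generates $\Grot{A}$, we may write $\class{P_k} = \sum_{\ell \in I} c_{k\ell} \class{S_\ell}$ for unique integers $c_{k\ell}$. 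Pairing against $\class{P_i}$ on the left using the bilinearity of $\chi_A$ gives $\chi_A(P_i, P_k) = \sum_{\ell} c_{k\ell}\, \chi_A(P_i, S_\ell)$, and by \eqref{eq:pairing} (with $K$ algebraically closed, so the pairing is $\delta_{i\ell}$) this collapses to $\dm_{ik} = c_{ki}$. Thus the change-of-basis coefficients are exactly the entries of $\DM_A$.

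With this identification in hand, I would substitute into the product: using $\dm_{ik} = c_{ki}$ and bilinearity,
\begin{equation}
\sum_{k \in I} \dm_{ik}\, \chi_A(S_j, S_k)
=
\sum_{k \in I} c_{ki}\, \chi_A(S_j, S_k),
\end{equation}
but this is not quite the contraction that $\class{P_k} = \sum_\ell c_{k\ell}\class{S_\ell}$ produces, so I would instead pair $\class{P_k}$ in the \emph{second} argument against $\class{S_j}$ in the first. Concretely, compute $\chi_A(S_j, P_k) = \sum_\ell c_{k\ell}\, \chi_A(S_j, S_\ell)$; by the arguments-reversed form of \eqref{eq:pairing} this equals $c_{kj}\,\delta$-type term, so I expect $\chi_A(S_j, P_k) = \dm_{jk}$ as well, reflecting a symmetry between the roles of the two bases. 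The cleanest route is therefore to pair the identity $\class{P_k} = \sum_\ell c_{k\ell}\class{S_\ell}$ against both distinguished families and read off that the matrix of $\chi_A$ in the mixed $(S,P)$-bases is the identity (by \eqref{eq:pairing}), while $\DM_A$ and $\EuM_A$ are the matrices of $\chi_A$ relating the $P$-basis and $S$-basis to themselves; the product $\DM_A\EuM_A$ is then a composite of change-of-basis matrices that telescopes to the identity.

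The main obstacle I anticipate is purely bookkeeping: keeping straight which argument of the bilinear form $\chi_A$ each basis sits in, and correctly tracking the transpose that arises because $\eum_{ij} = \chi_A(S_j, S_i)$ swaps the indices. The representation-theoretic content — biorthogonality of simples and indecomposable projectives under $\chi_A$, plus the fact that both families are free bases of $\Grot{A}$ — is already supplied by \eqref{eq:pairing} and the preceding discussion, so no further homological input is needed. I would take care to note that because all matrices involved have integer entries and $\DM_A$ is square, exhibiting a one-sided inverse is enough to conclude invertibility over $\ZZ$, which also discharges the parenthetical claim that $\DM_A$ is invertible over the integers.
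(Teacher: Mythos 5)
Your overall strategy --- reading $\DM_A$ and $\EuM_A$ off from the two bases $\bigl(\class{P_i}\bigr)$ and $\bigl(\class{S_i}\bigr)$ of $\Grot{A}$ together with the biorthogonality \eqref{eq:pairing} --- is exactly the paper's, and your first step is correct: expanding $\class{P_k} = \sum_{\ell} c_{k\ell}\class{S_\ell}$ and applying $\chi_A(P_i,-)$ does give $\dm_{ik} = c_{ki}$. The gap is in the second half. There is no ``arguments-reversed form of \eqref{eq:pairing}'': the identity $\chi_A(P_i,S_j)=\delta_{ij}$ rests on $\Ext_A^n(P_i,-)=0$ for $n>0$, which has no counterpart when the projective sits in the \emph{second} argument, so in general $\chi_A(S_j,P_k)$ is neither $\delta_{jk}$ nor $\dm_{jk}$. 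For instance, for the path algebra of the quiver $1\to 2$ one has $\class{S_1}=\class{P_1}-\class{P_2}$, whence $\chi_A(S_1,P_1)=\dim\Hom_A(P_1,P_1)-\dim\Hom_A(P_2,P_1)=1-1=0$, not $1$. (In matrix terms, $\chi_A(S_j,P_k)$ is the $(j,k)$ entry of $(\DM_A^{-1})^{T}\DM_A$, which is the identity only when $\DM_A$ is symmetric.) Your closing ``telescoping'' paragraph relies on this same unjustified symmetry --- only the $(P,S)$-ordered mixed matrix is the identity, not the $(S,P)$-ordered one --- so as written the argument does not close.

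The repair is short, and there are two ways to do it. The paper's way: also expand the simples in the projective basis, $\class{S_j}=\sum_k \cmi_{kj}\class{P_k}$ with $(\cmi_{ij})$ the inverse of the change-of-basis matrix, and apply $\chi_A(-,S_i)$; now the projectives land in the \emph{first} argument, \eqref{eq:pairing} applies, and you read off $\eum_{ij}=\chi_A(S_j,S_i)=\cmi_{ij}$, which combined with your identification of $\DM_A$ with the change-of-basis matrix gives $\EuM_A=\DM_A^{-1}$ at once. Alternatively, keep only your expansion of $\class{P_k}$ but compute the product in the other order: using $\dm_{kj}=c_{jk}$ and bilinearity in the first argument, $(\EuM_A\DM_A)_{ij}=\sum_k \chi_A(S_k,S_i)\,c_{jk}=\chi_A(P_j,S_i)=\delta_{ij}$, where the contraction now reassembles $\class{P_j}$ in the slot where \eqref{eq:pairing} applies as stated. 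As you note, a one-sided inverse of a square matrix over $\ZZ$ is automatically two-sided, so either route finishes the proof.
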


\begin{proof}
Since $\bigl(\class{P_i}\bigr)_{i \in I}$ and $\bigl(\class{S_i}\bigr)_{i
  \in I}$ are both bases for the $\ZZ$-module $\Grot{A}$, there is an
invertible matrix $\CM_A = (\cm_{ij})_{i, j \in I}$ over $\ZZ$ such that,
writing $C_A^{-1} = (\cmi_{ij})$,
\begin{align}
\label{eq:base-change-P}
\class{P_j}     &
= \sum_{k \in I} \cm_{kj} \class{S_k},  \\
\label{eq:base-change-S}
\class{S_j}     &
= \sum_{k \in I} \cmi_{kj} \class{P_k}
\end{align}
for all $j \in I$.  Since $K$ is algebraically closed,
equation~\eqref{eq:pairing} states that 
$\chi_A(P_i, S_j) = \delta_{ij}$.  Applying $\chi_A(P_i, -)$ to each side
of~\eqref{eq:base-change-P} therefore gives $\chi_A(P_i, P_j) = C_{ij}$,
which by~\eqref{eq:chi-Z} is equivalent to $\dm_{ij} = C_{ij}$.  On the other
hand, applying $\chi_A(-, S_i)$ to each side of~\eqref{eq:base-change-S}
gives $\eum_{ij} = \cmi_{ij}$.  Hence $\DM_A = \CM_A$ and $\EuM_A =
\CM_A^{-1}$.
  \end{proof}

The matrix $\CM_A = \DM_A$ is known as the \demph{Cartan matrix} of $A$
(\cite{BFVZ}, \cite[\S5]{Ei}, \cite{Fu}).  Explicitly, $\cm_{ij}$ is the
multiplicity of $S_i$ as a composition factor of $P_j$.

We now deduce Theorem~\ref{thm:main}.  By definition, $\mg{\IP{A}}$ is the
sum of the entries of $\DM_A^{-1}$.  Hence by Theorem~\ref{thm:inverse} and
the $\ZZ$-bilinearity of $\chi_A$,
\begin{equation}
\mg{\IP{A}} 
= 
\sum_{i, j \in I} \chi_A(S_j, S_i) 
=
\chi_A \biggl( \bigoplus_{j \in I} S_j, \bigoplus_{i \in I} S_i \biggr)
=
\chi_A(S, S),
\end{equation}
completing the proof.


\begin{example}
\label{eg:acyclic}
Let $Q$ be a finite acyclic quiver (directed graph).  Then $Q$ consists of
a finite set $I$ of vertices together with, for each $i, j \in I$, a finite
set $Q(i, j)$ of arrows from $i$ to $j$.  The \demph{path algebra} $A$ of
$Q$ is defined as follows.  As a vector space, it is generated by the paths
in $Q$, including the zero-length path $e_i$ on each vertex $i$.
Multiplication is concatenation of paths where that is defined, and zero
otherwise.  We write multiplication in the same order as composition, so
that if $\alpha$ is a path from $i$ to $j$ and $\beta$ is a path from $j$
to $k$ then $\beta\alpha$ is a path from $i$ to $k$.  The identity is
$\sum_{i \in I} e_i$.  That $Q$ is finite and acyclic guarantees that $A$
is of finite dimension and finite global dimension.

Path algebras of quivers are very well-understood
(e.g.~\cite[Chapter~I]{SY}).  The simple and indecomposable projective
$A$-modules are indexed by the vertex-set $I$.  The indecomposable
projective module $P_i$ corresponding to vertex $i$ is the submodule of the
$A$-module $A$ spanned by the paths beginning at $i$.  It has a unique
maximal submodule $N_i$, spanned by the paths of nonzero length beginning
at $i$, and the corresponding simple module $S_i = P_i/N_i$ is
one-dimensional.

Using the facts listed in Section~\ref{sec:background}, we can compute the
Euler form of $A$.  For each $i, j \in I$, the short exact sequence
\begin{equation}
0 \to N_i \to P_i \to S_i \to 0
\end{equation}
gives rise to a long exact sequence
\begin{equation}
\label{eq:quiver-les}
0 \to \Ext_A^0(S_i, S_j) \to \Ext_A^0(P_i, S_j) \to \Ext_A^0(N_i, S_j)
\to \cdots.
\end{equation}
Observing that $N_i = \bigoplus_{k \in I} P_k^{Q(i, k)}$, we deduce
from~\eqref{eq:quiver-les} that
\begin{equation}
\Ext_A^n(S_i, S_j)
=
\begin{cases}
K^{\delta_{ij}} &\text{if } n = 0,      \\
K^{Q(i, j)}     &\text{if } n = 1,      \\
0               &\text{if } n \geq 2.
\end{cases}
\end{equation}
Hence, writing $E = \coprod_{i, j \in Q} Q(i, j)$ for the set of arrows of
$Q$, 
\begin{equation}
\Ext_A^n(S, S)
=
\begin{cases}
K^{|I|}         &\text{if } n = 0,      \\
K^{|E|}         &\text{if } n = 1,      \\
0               &\text{if } n \geq 2.
\end{cases}
\end{equation}
It follows that $\chi_A(S, S) = |I| - |E|$, which is the Euler
characteristic (in the elementary sense) of the quiver $Q$.

On the other hand, each path from vertex $j$ to vertex $i$ induces a
homomorphism $P_i \to P_j$ by composition, and in fact every homomorphism
$P_i \to P_j$ is a unique linear combination of homomorphisms of this form.
Hence $\dm_{ij}$ is the number of paths from $j$ to $i$ in $Q$.

So in the case at hand, Theorem~\ref{thm:main} states that if we take an
acyclic quiver $Q$, form the matrix whose $(i, j)$-entry is the number of
paths from $j$ to $i$, invert this matrix, and sum its entries, the result
is equal to the Euler characteristic of $Q$.  This was also shown directly
as Proposition~2.10 of~\cite{LeECC}.
\end{example}

\section{Some remarks}

\subsection*{Arbitrary base fields}
 
The assumption that the base field is algebraically closed 
is needed for the simple form of the duality formula, $\chi_A(P_i, S_j) =
\delta_{ij}$.  Otherwise, equation~\eqref{eq:pairing} only gives
\begin{equation}\label{eq:duality-d}
\chi_A(P_i, S_j) 
=
\begin{cases} 
d_j     &\text{if } i = j, \\ 
0       &\text{if } i \neq j,
\end{cases}
\end{equation}
where $d_j=\dim \End_A(S_j)$.  Then, applying $\chi_A(P_i, -)$ to
\eqref{eq:base-change-P} yields $\dm_{ij} = d_i \cm_{ij}$, while applying
$\chi_A(-, S_i)$ to \eqref{eq:base-change-S} yields $E_{ij} = d_i
\cmi_{ij}$.  Therefore, writing $\DM_A^{-1} = (\dmi_{ij})$, we get
\begin{equation}\label{eq:MEgen}
   \dmi_{ij} = d_i^{-1} E_{ij} d_j^{-1},
\end{equation}
which generalises Theorem~\ref{thm:inverse}.
We can then sum \eqref{eq:MEgen} to generalise Theorem~\ref{thm:main}
as follows:
\begin{equation}
\label{eq:mag=chi-v2}
\mg{\IP{A}} = \chi_A(\Stil,\Stil),
\end{equation} 
where $\Stil = \bigoplus_{i\in I} d_i^{-1} S_i$,
which may be regarded as a formal module
or we may note that \eqref{eq:mag=chi-v2} only depends on the class
$\class{\Stil} = \sum_{i\in I} d_i^{-1} \class{S_i} 
\in \Grot{A}\tensor_\ZZ \QQ$.

\subsection*{The determinant of the Cartan matrix}

The fact that, when $A$ has finite global dimension,
the Cartan matrix $\CM_A$ is invertible over $\ZZ$
or, equivalently, is unimodular, i.e.\ $\det C_A =\pm 1$,  
is an old observation of Eilenberg~\cite[\S 5]{Ei}.
On the other hand, the `Cartan determinant conjecture' that, 
in fact, $\det C_A = 1$ is still unsolved in general, 
although it is confirmed in many cases; see~\cite{Fu} for a survey.

An easy example is when $A$ is (Morita equivalent to) a quotient of the
path algebra of an acyclic quiver, in which case $A$ is necessarily finite
dimensional and of finite global dimension. In this case $\CM_A = \DM_A$ can
be made upper triangular with $1$s on the diagonal, so it certainly has
$\det\CM_A =1$.
As another example, Zacharia~\cite{Za} showed that the conjecture
holds whenever $A$ has global dimension 2.

It is not hard to give an example of an algebra $A$
for which $\CM_A = \DM_A$ is not even invertible over $\QQ$: 
e.g.\ the quiver algebra given by a single $n$-cycle,
with all paths of length $n$ set to 0, has $\cm_{ij}=\dm_{ij}=1$
for all $i, j\in I$.
Inevitably, this algebra does not have finite global dimension.

In this example, it is in fact still possible~\cite[\S1]{LeMMS} to define the
magnitude of $\IP{A}$, and indeed $\mg{\IP{A}} = 1$.  However, it is less
clear how one might find a homological interpretation of this.

\subsection*{Acknowledgements} We thank Iain Gordon, Catharina Stroppel,
Peter Webb and Michael Wemyss for helpful discussions.


%
%
%
%
%
%
%

\begin{thebibliography}{1}
\newcommand{\au}[1]{\textrm{#1},}
\newcommand{\ti}[1]{\textrm{#1},}
\newcommand{\jo}[1]{\textit{#1}}
\newcommand{\vo}[1]{\textbf{#1}}
\newcommand{\yr}[1]{(#1)}
\newcommand{\pp}[2]{#1--#2.}
\newcommand{\bkti}[1]{\textit{#1},}
\newcommand{\pu}[1]{\textrm{#1},}
\newcommand{\bkyr}[1]{#1.}
\newcommand{\ar}[2]{\href{http://arxiv.org/abs/#1.#2}{arXiv:\linebreak[0]#1.\linebreak[0]#2},}
\newcommand{\aryr}[1]{#1.}

\bibitem{ASS}
  \au{I.~Assem, D.~Simson and A.~Skowro\'nski}
  \bkti{Elements of the Representation Theory of Associative Algebras.
    Volume~1: Techniques of Representation Theory}
  \pu{Cambridge University Press}
  \bkyr{2006}

\bibitem{Be}
  \au{D.J.~Benson}
  \bkti{Representations and Cohomology~I: Basic Representation Theory of
    Finite Groups and Associative Algebras}
  \pu{Cambridge University Press}
  \bkyr{1995}

\bibitem{BFVZ}
  \au{W.D.~Burgess, K.R.~Fuller, E.R.~Voss and B.~Zimmermann-Huisgen}
  \ti{The Cartan matrix as an indicator of finite global dimension 
 for Artinian rings}
  \jo{Proc.\ A.M.S.} \vo{95} \yr{1985} \pp{157}{165}

\bibitem{Ei}
  \au{S.~Eilenberg}
  \ti{Algebras of cohomologically finite dimension}
  \jo{Comment.\ Math.\ Helv.}\ \vo{28} \yr{1954} \pp{310}{319}

\bibitem{Fu}
  \au{K.R.~Fuller}
  \ti{The Cartan determinant and global dimension of artinian rings}
  \jo{Contemp.\ Math.}\ \vo{124} \yr{1992} \pp{51}{72}

\bibitem{La}
  \au{F.W.~Lawvere}
  \ti{Metric spaces, generalized logic and closed categories}
  \jo{Rend.\ Sem.\ Mat.\ Fis.\ Milano} \vo{XLIII} \yr{1973} \pp{135}{166}
  Also 
  \jo{Repr.\ Theory Appl.\ Categ.}\ \vo{1} \yr{2002} \pp{1}{37}

\bibitem{LeBB}
  \au{T.~Leinster}
  \ti{The bijection between projective indecomposable and simple modules}
  \jo{Bull.\ Belg.\ Math.\ Soc.\ Simon Stevin}\ \vo{22} \yr{2015} \pp{725}{735}

\bibitem{LeECC}
  \au{T.~Leinster}
  \ti{The Euler characteristic of a category}
  \jo{Doc.\ Math.}\ \vo{13} \yr{2008} \pp{21}{49}

\bibitem{LeMMS}
  \au{T.~Leinster}
  \ti{The magnitude of metric spaces}
  \jo{Doc.\ Math.}\ \vo{18} \yr{2013} \pp{857}{905}

\bibitem{LeWi}
  \au{T.~Leinster and S.~Willerton}  
  \ti{On the asymptotic magnitude of subsets of Euclidean space}
  \jo{Geom.\ Ded.}\ \vo{164} \yr{2013} \pp{287}{310}

\bibitem{MePDMS}
  \au{M.W.~Meckes}
  \ti{Positive definite metric spaces}
  \jo{Positivity} \vo{17} \yr{2013} \pp{733}{757}

\bibitem{MeMDC}
  \au{M.W.~Meckes}
  \ti{Magnitude, diversity, capacities, and dimensions of metric spaces}
  \jo{Potential Anal.} \vo{42} \yr{2015} \pp{549}{572}

\bibitem{SY}
  \au{A.~Skowro\'nski and K.~Yamagata},
  \bkti{Frobenius Algebras I: Basic Representation Theory}
  \pu{European Mathematical Society} \bkyr{2012}

\bibitem{We}
  \au{C.A.~Weibel}
  \bkti{An Introduction to Homological Algebra},
  \pu{Cambridge University Press} \bkyr{1994}

\bibitem{Za}
  \au{D.~Zacharia}
  \ti{On the Cartan matrix of an artin algebra of global dimension two}
  \jo{J.~Algebra} \vo{82} \yr{1983} \pp{353}{357}

\end{thebibliography}
\end{document}